\newcommand{\bneqn}{\vspace{-0.25cm}\begin{eqnarray}}
\newcommand{\eneqn}{\end{eqnarray}}
\newtheorem{theorem}{Theorem}
\newtheorem{lemma}[theorem]{Lemma}
\newtheorem{remark}[theorem]{Remark}
\newtheorem{proposition}[theorem]{Proposition}
\title{The existence of the least favorable noise}
\author{ Dongzhou Huang\footnote{Department of Statistics, Colorado State University, United States of America. E-mail: dongzhou.huang@colostate.edu}}
\begin{document}
\maketitle

\begin{abstract}
Suppose that a random variable $X$ of interest is observed. This paper concerns ``the least favorable noise'' $\hat{Y}_{\epsilon}$, which maximizes the prediction error $E [X - E[X|X+Y]]^2 $ (or minimizes the variance of $E[X| X+Y]$) in the class of $Y$ with $Y$ independent of $X$ and $\mathrm{var} Y \leq \epsilon^2$. This problem was first studied by Ernst, Kagan, and Rogers (\cite{10.1214/22-ECP467}). In the present manuscript, we show that the least favorable noise $\hat{Y}_{\epsilon}$ must exist and that its variance must be $\epsilon^2$. The proof of existence relies on a convergence result we develop for variances of conditional expectations. Further, we show that the function $\inf_{\mathrm{var} Y \leq \epsilon^2} \, \mathrm{var} \, E[X|X+Y]$ is both strictly decreasing and right continuous in $\epsilon$.
\end{abstract}

\section{Introduction} \label{sec:introduction}

\hspace*{0.5cm} In 2022, Ernst, Kagan and Rogers (\cite{10.1214/22-ECP467}) investigated the problem of ``the least favorable noise'' for an observed square-integrable random variable $X$. The authors considered $\hat{Y}_{\epsilon}$, a square-integrable random variable independent of $X$, which maximizes the prediction error
\begin{equation*}
E\left[ X- E[ X| X+Y] \right]^{2} = \mathrm{var} X - \mathrm{var} \, E[X|X+Y],
\end{equation*}
(or, equivalently, minimizes the variance of $E[X| X+Y]$) in the class of $Y$ with  $\mathrm{var} Y \leq \epsilon^2$. The authors proceeded to characterize the least favorable noise and show that $Y$ should be the least favorable noise if the distribution of $Y$ satisfies the conditions of a given characterization.\\
\indent The present manuscript takes a step back from the Ernst et al. characterization of the least favorable noise and asks the following question: `does the least favorable noise exist?' In other words, given the distribution of $X$, does there exist a distribution of $Y$ which achieves the maximum of $E\left[ X- E[ X| X+Y] \right]^{2}$? Although Ernst et al. show the existence of the least favorable noise given (i) the distribution of $X$ and (ii) given that the distribution of $Y$ satisfies three characterization conditions in \cite{10.1214/22-ECP467}, the conditions are somewhat complicated and, more importantly, nearly impossible to verify for most distributions of $X$. Therefore, the question of existence of the least favorable noise has remained open. The key contribution of the present paper is to close this problem by showing the existence of the least favorable noise for any distribution of $X$. The proof relies on a convergence result for the variance of the conditional expectation of $X_n$ given $X_n + Y_n$, provided $(X_n, Y_n)$ converges weakly (see Theorem \ref{prop:keylemmatoshowexistence}).

Before proceeding with the proof of the existence of the least favorable noise, we pause to provide some practical implications of the least favorable noise. In some applied scientific scenarios, the observed random signal $X$ may be highly volatile, making it difficult to analyze. In this case, one may wish to simplify the signal $X$ while keeping its main structure. One possible way to do so is to consider the random variable
\begin{equation*}
Q=E\left[ X \big| X+ \hat{Y}_{\epsilon} \right],
\end{equation*}
where $\hat{Y}_{\epsilon}$ is the least favorable noise. The random variable $Q$ has two important properties: (i) The variance of $Q$ is no more than the variance of $X$ and achieves the minimum of $\mathrm{var}\, E[X| X+Y]$ over $Y$ satisfying $\mathrm{var} Y \leq \epsilon^2$, meaning that it is less volatile, and (ii) noting that $X+ \hat{Y}_{\epsilon}$ is close to $X$ for small $\epsilon$, the random variable $Q$, as a function of $ X+ \hat{Y}_{\epsilon} $, preserves the structure of $X$.

We now formalize the problem under consideration. To avoid trivialities, we assume $X$ is a non-degenerate and  that it is a square-integrable random variable. Without loss of generality, we may assume the mean of $X$ to be $0$. Since the conditional expectation $E[X|X+Y]$ remains the same if we shift $Y$ by a constant, we shall only focus on the following class of random variables $Y$:
\begin{equation*}
 \mathcal{V}_{\epsilon}(X) := \{ Y : Y \text{ independent of $X$},\, E[Y]=0 \text{ and } E[Y^2] \leq \epsilon^{2}  \}.
\end{equation*}
We define
\begin{equation*}
L(X, \epsilon) := \inf_{Y \in \mathcal{V}_{\epsilon}(X)} \mathrm{var}\, E[X|X+Y].
\end{equation*}
Then the least favorable noise $\hat{Y}_{\epsilon}$ is a random variable $Y$ in $\mathcal{V}_{\epsilon}(X)$ such that $\mathrm{var} \, E[X|X+Y]$ achieves $L(X, \epsilon)$.

In Section \ref{sec:existence}, we prove the existence of the least favorable noise. That is, we prove the existence of $\hat{Y}_{\epsilon}$ which minimizes $\mathrm{var}\, E[X|X+Y]$ when $\mathrm{var} Y \leq \epsilon^2$. In general, to show the existence of the minimizer of a value function $f(x)$, one typically adopts the following strategy:
\begin{enumerate}[label=(\roman*)]
\item One constructs a sequence $\{x_n\}_{n=1}^{\infty}$ such that $f(x_n)$ converges to $\inf\limits_{x} f(x)$;
\item One finds a convergent subsequence of $\{x_n\}_{n=1}^{\infty}$ which converges to $x_{*}$;
\item One then shows that $f(x_{*}) = \inf\limits_{x} f(x)$, from which one concludes that $x_{*}$ is the minimizer.
\end{enumerate}
In this paper, we indeed follow the above strategy. Firstly, we consider a sequence $\{ Y_n\}_{n=1}^{\infty}$ such that $\mathrm{var} Y_n \leq \epsilon^2$ and such that the variance of $E[X|X+Y_n]$ converges to $\inf\limits_{Y} \mathrm{var}\, E[X|X+Y]$. Secondly, by the tightness of $\{ Y_n\}_{n=1}^{\infty}$, there exists a subsequence that converges weakly to a random variable, say, $\tilde{Y}$. It then remains to show that the variance of $E[X| X + \tilde{Y}]$ is exactly $\inf\limits_{Y} \mathrm{var}\, E[X|X+Y]$. This represents the most mathematically challenging task of this work. To this end, we develop a convergence result for the variances of conditional expectations (see Theorem \ref{prop:keylemmatoshowexistence}), which is in part inspired by the work of \cite{crimaldi2005convergence}.

The remainder of this paper is organized as follows. In Section \ref{sec:property}, we prove that the variance of the least favorable noise must be $\epsilon^2$, allowing us to reduce the class of $Y$ when searching for the least favorable noise. Indeed, this result simplifies the three characterization conditions in \cite{10.1214/22-ECP467} to two characterization conditions. In Section \ref{sec:LXepsilon}, we consider further properties of $L(X, \epsilon)$. We prove that it is both strictly decreasing and right continuous in $\epsilon$ on $[0, \infty)$. Consequently, the maximum of the prediction error $E [X - E[X|X+Y]]^2$ is strictly increasing as the variance of the noise $Y$ increases. In other words, ``more noise makes prediction worse.'' This conclusively answers Question 4 posed on page 2 of \cite{10.1214/22-ECP467}.


\section{Existence of the least favorable noise} \label{sec:existence}
\indent The main result of this section is the proof of the existence of the least favorable noise, which we give in Theorem \ref{thm:existencetheorem}.\\
\indent We begin by recording Lemma \ref{lemma:usefulleemaformomentconvergence} below.
Since this lemma is a direct result of Skorokhod's representation theorem, Fatou's lemma, and Theorem 4.5.2 in \cite{chung2001course}, we omit the proof.
\begin{lemma} \label{lemma:usefulleemaformomentconvergence}
Suppose $\{X_{n}\}_{n=1}^{\infty}$ is a sequence of random variables converging weakly to some random variable $X$. Further, assume that
\begin{equation*}
\sup_{1 \leq n < \infty} E[X_n^2] < \infty.
\end{equation*}
Then
\begin{equation*}
E[X^2] \leq \liminf_{n \rightarrow \infty} E[X_n^2] \ \text{ and } \ 
\lim_{n \rightarrow \infty} E[X_n] = E[X].
\end{equation*}
\end{lemma}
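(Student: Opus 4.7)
The plan is to transfer the weak convergence $X_n \Rightarrow X$ to almost sure convergence on a common probability space via Skorokhod's representation theorem, then handle the two conclusions separately: the $\liminf$ inequality via Fatou's lemma, and the convergence of means via a uniform integrability argument.

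First I would invoke Skorokhod's theorem to obtain random variables $\tilde X_n$ and $\tilde X$ on a common probability space with $\tilde X_n \stackrel{d}{=} X_n$ and $\tilde X \stackrel{d}{=} X$, such that $\tilde X_n \to \tilde X$ almost surely. Since both conclusions depend only on the distributions of the variables involved, it suffices to argue throughout with the tilde versions. For the first assertion, I would then apply Fatou's lemma to the non-negative sequence $\tilde X_n^2 \to \tilde X^2$ a.s., yielding $E[\tilde X^2] \le \liminf_n E[\tilde X_n^2]$, which is exactly the first inequality. A useful by-product is that $E[X^2] < \infty$, so in particular $E[X]$ is well-defined.

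For the convergence of means, the hypothesis $\sup_n E[X_n^2] < \infty$ gives $L^2$-boundedness, which by Theorem 4.5.2 of \cite{chung2001course} implies that $\{\tilde X_n\}$ is uniformly integrable. Combining uniform integrability with the almost sure convergence $\tilde X_n \to \tilde X$ gives $L^1$-convergence, and in particular $E[\tilde X_n] \to E[\tilde X]$, which translates back to $E[X_n] \to E[X]$.

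There is no serious obstacle: each of the three ingredients (Skorokhod's representation, Fatou's lemma, and uniform integrability from $L^p$-boundedness with $p>1$) is entirely classical, which is why the author omits the proof. The only small point to be mindful of is verifying that $E[X]$ is meaningful before asserting the convergence of expectations; this is automatically supplied by the first part of the lemma.
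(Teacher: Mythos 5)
Your proposal is correct and follows exactly the route the paper indicates: Skorokhod's representation theorem to get almost sure convergence, Fatou's lemma for the $\liminf$ inequality, and uniform integrability from $L^2$-boundedness (Chung's Theorem 4.5.2) for the convergence of means. The paper omits the proof precisely because it is this standard combination, so there is nothing further to add.
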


We proceed to introduce Theorem \ref{prop:keylemmatoshowexistence} below, which gives a convergence result for the variances of conditional expectations. It is, in part, inspired by the work of \cite{crimaldi2005convergence}. 
\begin{theorem} \label{prop:keylemmatoshowexistence}
Let $\{(X_n, Y_n)\}_{n=1}^{\infty}$ be a sequence of random vectors which converges weakly to some random vector $(X,Y)$. If
\begin{equation*}
 \sup_{1 \leq n < \infty} E[X_n^2] < \infty,
\end{equation*}
then
\begin{equation*}
\mathrm{var} \, E[X|X+Y] \leq \liminf_{n \rightarrow \infty} \, \mathrm{var} \, E[X_n|X_n+Y_n].
\end{equation*}
\end{theorem}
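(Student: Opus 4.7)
The plan is to represent $\mathrm{var}\,E[X|X+Y]$ as a supremum over bounded continuous test functions of $X+Y$, so that each term of the supremum passes cleanly to the limit under weak convergence. Write $h(X+Y) := E[X|X+Y]$; since $\mathrm{var}\,h(X+Y) \leq E[X^2] < \infty$ (finiteness of $E[X^2]$ following from Lemma \ref{lemma:usefulleemaformomentconvergence}), $h$ lies in $L^2(\mu_{X+Y})$. For any Borel $g$ with $\mathrm{var}\,g(X+Y) \in (0,\infty)$, the tower property rewrites $\mathrm{cov}(X,g(X+Y))$ as $\mathrm{cov}(h(X+Y),g(X+Y))$, so Cauchy--Schwarz gives
\begin{equation*}
\frac{\mathrm{cov}(X,g(X+Y))^2}{\mathrm{var}\,g(X+Y)} \leq \mathrm{var}\,E[X|X+Y],
\end{equation*}
with equality when $g = h$. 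Hence $\mathrm{var}\,E[X|X+Y]$ equals the supremum of the left-hand side over all Borel $g$ with positive variance, and density of $C_b(\RR)$ in $L^2(\mu_{X+Y})$ allows me to restrict the supremum to bounded continuous $g$.

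Next, I fix a bounded continuous $g$ with $\mathrm{var}\,g(X+Y) > 0$ and pass the corresponding quotient to the limit. The continuous mapping theorem gives $g(X_n+Y_n) \Rightarrow g(X+Y)$ with uniformly bounded moments, so $E[g(X_n+Y_n)] \to E[g(X+Y)]$ and $E[g(X_n+Y_n)^2] \to E[g(X+Y)^2]$. For the cross term, $(x,y) \mapsto x\,g(x+y)$ is continuous, and the bound $|X_n g(X_n+Y_n)| \leq \|g\|_\infty\,|X_n|$ together with $\sup_n E[X_n^2] < \infty$ makes $\{X_n g(X_n+Y_n)\}$ uniformly integrable; combined with the joint weak convergence $(X_n,Y_n) \Rightarrow (X,Y)$, this yields $E[X_n g(X_n+Y_n)] \to E[Xg(X+Y)]$. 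Lemma \ref{lemma:usefulleemaformomentconvergence} supplies $E[X_n] \to E[X]$, and assembling these convergences gives $\mathrm{cov}(X_n,g(X_n+Y_n)) \to \mathrm{cov}(X,g(X+Y))$ and $\mathrm{var}\,g(X_n+Y_n) \to \mathrm{var}\,g(X+Y) > 0$.

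Applying the variational bound of the first paragraph at level $n$, I obtain $\mathrm{var}\,E[X_n|X_n+Y_n] \geq \mathrm{cov}(X_n,g(X_n+Y_n))^2/\mathrm{var}\,g(X_n+Y_n)$ for all sufficiently large $n$; taking $\liminf$ on both sides and using the convergences above,
\begin{equation*}
\liminf_{n \to \infty} \mathrm{var}\,E[X_n|X_n+Y_n] \geq \frac{\mathrm{cov}(X,g(X+Y))^2}{\mathrm{var}\,g(X+Y)},
\end{equation*}
and taking the supremum over bounded continuous $g$ yields the theorem. The main obstacle is the convergence $E[X_n g(X_n+Y_n)] \to E[Xg(X+Y)]$, since $(x,y) \mapsto xg(x+y)$ is unbounded in $x$; the hypothesis $\sup_n E[X_n^2] < \infty$ must be used, via uniform integrability, rather than through a naive continuous-mapping argument. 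A smaller subtlety is the reduction to bounded continuous test functions in the variational identity, which rests on density of $C_b(\RR)$ in $L^2$ of any Borel probability measure on $\RR$.
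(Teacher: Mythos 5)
Your proof is correct, and it takes a genuinely different route from the paper's. The paper passes to a Skorokhod representation $(U_n,W_n)\to(U,W)$ a.s., proves via a monotone class argument that $E[h(U+W)\,T_n]\to E[h(U+W)\,T]$ for every bounded Borel $h$ (where $T_n=E[U_n|U_n+W_n]$, $T=E[U|U+W]$), and then extracts $E[T^2]\le\liminf E[T_n^2]$ by truncating the possibly unbounded representative $g$ of $T$ and using the inequality $ab\le(a^2+b^2)/2$. You instead encode the target quantity through the variational identity
\begin{equation*}
\mathrm{var}\,E[X|X+Y]=\sup_{g\in C_b(\RR),\ \mathrm{var}\,g(X+Y)>0}\frac{\mathrm{cov}(X,g(X+Y))^2}{\mathrm{var}\,g(X+Y)},
\end{equation*}
which follows from the tower property, Cauchy--Schwarz, and the density of $C_b(\RR)$ in $L^2(\mu_{X+Y})$, and then observe that each quotient is continuous along the weakly convergent sequence, so the $\liminf$ dominates every term of the supremum. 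This expresses the lower semicontinuity directly as a supremum of weakly continuous functionals and avoids both Skorokhod's representation theorem and the monotone class theorem; the price is the $L^2$-density argument, and the one step where the hypothesis $\sup_n E[X_n^2]<\infty$ is genuinely needed --- the convergence $E[X_ng(X_n+Y_n)]\to E[Xg(X+Y)]$ for the unbounded integrand $xg(x+y)$ --- is handled, exactly as in the paper's appendix, by $L^2$-boundedness hence uniform integrability. You correctly restrict the level-$n$ bound to $n$ large enough that $\mathrm{var}\,g(X_n+Y_n)>0$, and the degenerate case $\mathrm{var}\,E[X|X+Y]=0$ is trivially covered. The argument is complete and, to my eye, somewhat more self-contained than the published one.
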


\begin{proof}
We denote $\sup E[X_n^2]$ by $M$. By the Continuous Mapping Theorem, $X_n$ converges weakly to $X$. Applying Lemma \ref{lemma:usefulleemaformomentconvergence} yields 

\begin{equation}
E[X^2] \leq \liminf E[X_n^2] \leq M.
\end{equation}

\begin{equation} \label{eq:limexn=ex}
\lim_{n \rightarrow \infty} E[X_n] = E[X].
\end{equation}
By standard properties of conditional expectation, the mean of $E[X_n | X_n + Y_n]$ is $E[X_n]$. Then 
\begin{equation*}
\mathrm{var} \, E[X_n|X_n+Y_n] = E\left[ \left( E[X_n| X_n+Y_n] \right)^{2} \right] - \left(E[X_n]\right)^2. 
\end{equation*}
Similarly, $\mathrm{var} \, E[X|X+Y] = E\left[ \left( E[X| X+Y] \right)^{2} \right] - \left(E[X]\right)^2$. Together with \eqref{eq:limexn=ex}, we need only show that
\begin{equation}
 E\left[ \left( E[X| X+Y] \right)^{2} \right] \leq \liminf_{n \rightarrow \infty} 
 E\left[ \left( E[X_n| X_n+Y_n] \right)^{2} \right]. \label{eq:lessthenliminf}
\end{equation}
Since $(X_n , Y_n)$ converges weakly to $(X,Y)$, by Skorohod's representation theorem, we can construct $(U_n, W_n)$ and $(U,W)$ on a new probability space such that $(U_n, W_n) \overset{d}{=} (X_n, Y_n)$, $(U, W) \overset{d}{=} (X, Y)$ and $(U_n, W_n)$ converges to $(U,W)$ almost surely. It follows immediately that
\begin{equation*}
\sup_{1 \leq n < \infty} E[U_n^2] =  \sup_{1 \leq n < \infty} E[X_n^2] =M,
\end{equation*}
and
\begin{equation*}
E[U^2] = E[X^2] \leq M.
\end{equation*}
Noting that the distributions of $E[X_n|X_n+Y_n]$ and $E[U_n|U_n+W_n]$ depend only on the distributions, respectively, of $(X_n, Y_n)$ and $(U_n, W_n)$, it follows from the fact 
\begin{equation*}
(X_n, Y_{n}) \overset{d}{=}  (U_n, W_n)
\end{equation*}
that the distribution of $E[X_n|X_n+Y_n]$ coincides with that of $E[U_n|U_n+W_n]$. Thus,
\begin{equation*}
E\left[ \left( E[X_n| X_n+Y_n] \right)^{2} \right]  = E\left[ \left( E[U_n| U_n+W_n] \right)^{2} \right].
\end{equation*}
Similarly,
\begin{equation*}
E\left[ \left( E[X| X+Y] \right)^{2} \right]  = E\left[ \left( E[U| U+W] \right)^{2} \right].
\end{equation*}
Thus, to show \eqref{eq:lessthenliminf}, it suffices to prove
\begin{equation}
E\left[ \left( E[U| U+W] \right)^{2} \right] \leq \liminf_{n \rightarrow \infty}  E\left[ \left( E[U_n| U_n+W_n] \right)^{2} \right]. \label{eq:finalgoallessliminf}
\end{equation}

\noindent We proceed to define
\begin{equation}
T_n := E[U_n | U_n + W_n] \ \text{ and } \  T := E[U|U+W].  \label{eq:definitionofTandTn}
\end{equation}
By Jensen's inequality, for all $n \in \mathds{N}_{+}$,
\begin{equation*}
E[T_{n}^{2}] = E\left[ \left( E[U_n| U_n+W_n] \right)^{2} \right]
\leq E\left[  E[U_n^2| U_n+W_n]  \right]
= E[U_{n}^{2}] \leq M.
\end{equation*}
Similarly, $E[T^2] \leq M$. 

The proof continues by invoking the following lemma, whose proof is relegated to the Appendix.
\begin{lemma} \label{lm:connectionbetweensequenceandlimit}
For every bounded Borel function $h$ on $\mathbb{R}$,
\begin{equation}
E[h(U+W) \, T] = \lim_{n \rightarrow \infty} E[h(U+W) \, T_n]. \label{eq:limexpectationequal}
\end{equation}
\end{lemma}
\medskip

We now show how Lemma \ref{lm:connectionbetweensequenceandlimit} implies \eqref{eq:finalgoallessliminf}. Let $g$ be a Borel function on $\mathbb{R}$. By the definition of conditional expectation, $E[U|U+W]$ can be represented as $g(U+W)$. For $k \in \mathds{N}_{+}$, let $g_{k} := g \, \mathds{1}_{\{|g| \leq k\}}$. Then $g_{k}$ is a bounded Borel function. Applying Lemma \ref{lm:connectionbetweensequenceandlimit} gives
\begin{equation*}
E[g_{k}(U+W)\, T] = \lim_{n \rightarrow \infty} E[g_{k}(U+W)\, T_{n}].
\end{equation*}
Since 
\begin{equation*}
E[g_{k}(U+W)\, T_{n}] \leq \left( E\left[(g_{k}(U+W))^2\right] + E[T_{n}^{2}]\right) /2,
\end{equation*}
letting $n \rightarrow \infty$ yields
\begin{equation}
E[g_{k}(U+W)\, T] \leq \frac{1}{2} \, \liminf_{n \rightarrow \infty}   E[T_{n}^{2}]  +  \frac{1}{2} \,  E\left[(g_{k}(U+W))^2\right] . \label{eq:arigoemetryinequality}
\end{equation}
Noting that 
\begin{equation*}
g(U+W) = E[U|U+W] =T
\end{equation*}
is square-integrable, it then follows by dominated convergence that
\begin{equation}
\lim_{k \rightarrow \infty} E\left[(g_{k}(U+W))^2\right] = E\left[(g(U+W))^2\right] = E[T^2]. \label{eq:monotomectgetlimit}
\end{equation}
Further, noting that 
\begin{equation*}
|g_{k}(U+W) \,T| \leq |g(U+W) \,T| = T^2,
\end{equation*}
and recalling that $T^2$ is integrable, the family of random variables $\{g_{k}(U+W)\,T\}$ is uniformly integrable. Since $g_{k}(U+W)\,T$ converges to $g(U+W)\,T$ almost surely, we have that
\begin{equation}
\lim_{k \rightarrow \infty} E[g_{k}(U+W)\, T] 
= E[g(U+W)\, T] = E[T^2]. \label{eq:unformlyintegrablegetlimit}
\end{equation}
Finally, letting $k \rightarrow \infty$ on both sides of \eqref{eq:arigoemetryinequality}, and combining the results in \eqref{eq:monotomectgetlimit} and \eqref{eq:unformlyintegrablegetlimit}, we have
\begin{equation*}
E[T^2] \leq \liminf_{n} E[T_{n}^2],
\end{equation*}
which is precisely what appears in \eqref{eq:finalgoallessliminf}. This completes the proof.
\end{proof}

With Theorem \ref{prop:keylemmatoshowexistence} in hand, we are now ready to present the key theorem of this manuscript regarding existence of the least favorable noise.

\begin{theorem} \label{thm:existencetheorem}
Suppose $X$ is a non-degenerate random variable with zero mean and finite second moment. Then there exists a minimizer $Y \in \mathcal{V}_{\epsilon}(X)$ such that
\begin{equation*}
\mathrm{var}\, E[X|X+Y] = \inf_{Z \in \mathcal{V}_{\epsilon}(X)} \mathrm{var}\, E[X|X+Z] = L(X, \epsilon).
\end{equation*}
Consequently, the least favorable noise exists.
\end{theorem}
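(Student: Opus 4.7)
The plan is to execute the three-step minimizing-sequence strategy sketched in Section \ref{sec:existence}, with Theorem \ref{prop:keylemmatoshowexistence} supplying the critical lower-semicontinuity property that makes the argument close.

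First, I would select a sequence $\{Y_n\}_{n=1}^{\infty} \subset \mathcal{V}_{\epsilon}(X)$ with $\mathrm{var}\, E[X|X+Y_n] \to L(X,\epsilon)$. The uniform second-moment bound $E[Y_n^2] \leq \epsilon^2$ gives, via Chebyshev's inequality, that the laws of $\{Y_n\}$ are uniformly tight; Prokhorov's theorem then produces a subsequence (which I relabel $\{Y_n\}$) converging weakly to some random variable $\tilde{Y}$.

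Next, I would upgrade marginal weak convergence to joint weak convergence. Since each $Y_n$ is independent of $X$, the joint law of $(X, Y_n)$ is the product measure $\mathrm{Law}(X) \otimes \mathrm{Law}(Y_n)$, and product measures converge weakly whenever their marginals do. Hence $(X, Y_n)$ converges weakly to a pair whose law is $\mathrm{Law}(X) \otimes \mathrm{Law}(\tilde{Y})$, which can be realized on a suitable probability space as $(X, \tilde{Y})$ with $\tilde{Y}$ independent of $X$. I then verify $\tilde{Y} \in \mathcal{V}_{\epsilon}(X)$: independence is already secured, and Lemma \ref{lemma:usefulleemaformomentconvergence} applied to $\{Y_n\}$ gives $E[\tilde{Y}] = \lim_n E[Y_n] = 0$ and $E[\tilde{Y}^2] \leq \liminf_n E[Y_n^2] \leq \epsilon^2$.

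Finally, since the constant sequence $X$ trivially satisfies $\sup_n E[X^2] = E[X^2] < \infty$, Theorem \ref{prop:keylemmatoshowexistence} applies to $(X, Y_n) \Rightarrow (X, \tilde{Y})$ and yields
\begin{equation*}
\mathrm{var}\, E[X|X+\tilde{Y}] \;\leq\; \liminf_{n \to \infty} \mathrm{var}\, E[X|X+Y_n] \;=\; L(X,\epsilon).
\end{equation*}
The definition of $L(X,\epsilon)$ together with $\tilde{Y} \in \mathcal{V}_{\epsilon}(X)$ forces the reverse inequality, so $\tilde{Y}$ is the desired minimizer. The genuine mathematical difficulty --- the lower semicontinuity of $Y \mapsto \mathrm{var}\, E[X|X+Y]$ along weakly convergent sequences --- has already been absorbed into Theorem \ref{prop:keylemmatoshowexistence}; the only remaining subtleties are the passage from marginal to joint weak convergence (free under independence) and the inheritance of the zero-mean and variance-$\leq \epsilon^2$ constraints in the limit, both immediate consequences of Lemma \ref{lemma:usefulleemaformomentconvergence}.
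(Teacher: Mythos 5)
Your proposal is correct and follows essentially the same route as the paper: a minimizing sequence, tightness and Prokhorov's theorem, Lemma \ref{lemma:usefulleemaformomentconvergence} to pass the zero-mean and variance constraints to the weak limit, and Theorem \ref{prop:keylemmatoshowexistence} to supply the lower semicontinuity that closes the argument. The only cosmetic difference is that you derive joint weak convergence of $(X, Y_n)$ and independence of the limit from convergence of product measures, whereas the paper extracts a jointly convergent subsequence of $(X, Y_n)$ directly, verifies independence of the limit pair, and then realizes a copy $Y \overset{d}{=} \tilde{Y}$ independent of $X$; both are valid and interchangeable.
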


\begin{proof}
Let $Y_n \in \mathcal{V}_{\epsilon}(X)$ be a random variable such that
\begin{equation*}
\mathrm{var}\, E[X|X+Y_n] \leq  L(X, \epsilon) + \frac{1}{n}.
\end{equation*}
Since $E[X^2] < \infty$ and $\sup_{n} E[Y_n^2] \leq \epsilon^2$, the sequence of random vectors $\{(X,Y_n)\}_{n=1}^{\infty}$ is tight. Thus, there exists a subsequence $\{(X, Y_{n(k)})\}$ of $\{(X,Y_n)\}$ such that $(X, Y_{n(k)})$ converges weakly to some random vector, say, $(\tilde{X}, \tilde{Y})$. It follows by the Continuous Mapping Theorem that $X \overset{d}{=} \tilde{X}$ and $Y_{n(k)}$ converges weakly to $\tilde{Y}$. Noting that $X$ is independent of $Y_n$, it may be easily verified that $\tilde{X}$ is independent of $\tilde{Y}$. Furthermore, applying Lemma \ref{lemma:usefulleemaformomentconvergence} to the sequence $\{Y_{n(k)}\}$, we have
\begin{equation*}
E[\tilde{Y}^2] \leq \liminf_{k \rightarrow \infty} E[Y_{n(k)}^{2}] \leq \epsilon^2 \ \text{ and } \ 
E[\tilde{Y}] = \lim_{k \rightarrow \infty} E[Y_{n(k)}]=0.
\end{equation*}

We now may construct a random variable $Y$ such that $Y$ is independent of $X$ and $Y$ has the same distribution as $\tilde{Y}$. We now claim $Y$ is the desired minimizer. We proceed to prove this claim. Indeed, since 
\begin{equation*}
E[Y]= E[\tilde{Y}] =0
\end{equation*}
and 
\begin{equation*}
E[Y^2] = E[\tilde{Y}^2] \leq \epsilon^2,
\end{equation*}
then $Y \in \mathcal{V}_{\epsilon}(X)$. Furthermore, it follows by the assumption of independence of $X$ and $Y$ that $(X,Y)  \overset{d}{=} (\tilde{X}, \tilde{Y})$. Invoking a similar argument from the proof of Theorem \ref{prop:keylemmatoshowexistence} yields that
\begin{equation*}
\mathrm{var} E[X| X+Y] = \mathrm{var} E[\tilde{X}| \tilde{X} + \tilde{Y}].
\end{equation*}
 Finally, applying Theorem \ref{prop:keylemmatoshowexistence} yields
\begin{equation*}
\mathrm{var} E[\tilde{X}| \tilde{X} + \tilde{Y}] \leq
\liminf_{k \rightarrow \infty} \mathrm{var} E[X_{n(k)}| X_{n(k)}+Y_{n(k)}] = L(X, \epsilon).
\end{equation*}
Thus, $\mathrm{var} E[X| X+Y] \leq L(X, \epsilon)$, which proves that $Y$ is the desired minimizer.
\end{proof}

\section{Variance of the least favorable noise} \label{sec:property}
The purpose of this section is to calculate the variance of the least favorable noise (Theorem \ref{prop:variancemustequal}). Theorem \ref{prop:variancemustequal} relies on Proposition \ref{lm:inequalityforsecondmomentofconex} and Lemma \ref{lm:strictlyless} below. Proposition  \ref{lm:inequalityforsecondmomentofconex} is a standard result conditional expectation and therefore we omit the proof.
\begin{proposition} \label{lm:inequalityforsecondmomentofconex}
Suppose $X$ is a square-integral random variable. Let $\mathcal{F}$ and $\mathcal{G}$ be two $\sigma$-algebras with $\mathcal{G} \subset \mathcal{F}$. Then
\begin{equation*}
E\left[ \left( E[X | \mathcal{G}] \right)^2 \right] \leq E\left[ \left( E[X | \mathcal{F}] \right)^2 \right],
\end{equation*}
with equality holding if and only if $E[X | \mathcal{G}] = E[X | \mathcal{F}]$ almost surely.
\end{proposition}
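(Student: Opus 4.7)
The plan is to treat this as a routine consequence of the tower property combined with conditional Jensen's inequality, and to use the strict convexity of $t\mapsto t^{2}$ to pin down the equality case. First I would set $Y := E[X\mid \mathcal{F}]$ and $Z := E[X\mid \mathcal{G}]$. Since $\mathcal{G}\subset\mathcal{F}$, the tower property gives $Z = E[Y\mid \mathcal{G}]$. Both $Y$ and $Z$ are square-integrable because $E[Y^{2}] \leq E[E[X^{2}\mid\mathcal{F}]] = E[X^{2}] < \infty$, and similarly for $Z$, so every expectation below is well-defined.

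Next, I would apply conditional Jensen's inequality to the convex function $t\mapsto t^{2}$, which yields
\begin{equation*}
Z^{2} = \bigl(E[Y\mid \mathcal{G}]\bigr)^{2} \leq E[Y^{2}\mid \mathcal{G}] \qquad \text{a.s.}
\end{equation*}
Taking expectations of both sides and using the tower property a second time gives $E[Z^{2}] \leq E[Y^{2}]$, which is the stated inequality. An equivalent one-line route is to note that conditional expectation is the orthogonal projection in $L^{2}(\Omega,\mathcal{F},P)$ onto the closed subspace $L^{2}(\Omega,\mathcal{G},P)$, and that projecting onto a smaller closed subspace cannot increase the norm.

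For the equality case, suppose $E[Z^{2}] = E[Y^{2}]$. The nonnegative random variable $E[Y^{2}\mid\mathcal{G}] - Z^{2}$ then has zero mean, so it is zero a.s. Since $t\mapsto t^{2}$ is strictly convex, this is exactly the equality case of conditional Jensen and forces $Y$ to agree a.s. with some $\mathcal{G}$-measurable random variable, hence $Y = E[Y\mid\mathcal{G}] = Z$ a.s.; this is the claimed $E[X\mid\mathcal{F}] = E[X\mid\mathcal{G}]$ a.s. The converse direction is immediate, as equal random variables have equal second moments. I do not expect a genuine obstacle here; the only small care is the use of strict convexity of $t^{2}$ when extracting the equality case, which is what prevents the conclusion from collapsing to the trivial tautology $E[Y^{2}\mid\mathcal{G}] = E[Y^{2}\mid\mathcal{G}]$.
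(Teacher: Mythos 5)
Your proof is correct. The paper omits a proof of this proposition entirely, labelling it a standard fact about conditional expectation, so there is no argument to compare against; your tower-property-plus-conditional-Jensen route is the expected one. The only place worth tightening is the equality case: rather than appealing to the (true but less elementary) equality case of conditional Jensen for strictly convex functions, note the identity $E[Y^2\mid\mathcal{G}] - Z^2 = E[(Y-Z)^2\mid\mathcal{G}]$ with $Z = E[Y\mid\mathcal{G}]$; if its expectation vanishes then $E[(Y-Z)^2]=0$ directly, which is the Pythagorean/projection argument you already gestured at for the inequality and settles both directions at once.
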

We now introduce Lemma \ref{lm:strictlyless}, which relies on Proposition \ref{lm:inequalityforsecondmomentofconex}.
\begin{lemma} \label{lm:strictlyless}
Suppose $X$ is a non-degenerate random variable with zero mean and finite second moment and $Y$ is an arbitrary random variable independent of $X$. Let $Z$ be a non-degenerate Gaussian random variable independent of $\sigma(X,Y)$. Then
\begin{equation*}
\mathrm{var}\, E[X|X+Y+Z] < \mathrm{var}\, E[X|X+Y].
\end{equation*}
\end{lemma}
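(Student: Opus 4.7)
The plan is to exploit Proposition \ref{lm:inequalityforsecondmomentofconex} with the $\sigma$-algebras $\mathcal{G} := \sigma(X+Y+Z) \subset \mathcal{F} := \sigma(X+Y, Z)$. Since $Z$ is independent of $(X,Y)$, the pair $(X, X+Y)$ is also independent of $Z$, from which I get
\begin{equation*}
E[X \mid \mathcal{F}] = E[X \mid X+Y, Z] = E[X \mid X+Y] \quad \text{a.s.}
\end{equation*}
The proposition then yields $E\!\left[\left(E[X|X+Y+Z]\right)^2\right] \leq E\!\left[\left(E[X|X+Y]\right)^2\right]$, and since both conditional expectations share the mean $E[X]=0$, this is equivalent to the desired non-strict variance inequality.

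To upgrade to strict inequality, I would argue by contradiction. Suppose equality holds; by the equality clause in Proposition \ref{lm:inequalityforsecondmomentofconex}, $E[X|X+Y] = E[X|X+Y+Z]$ almost surely. Writing the two sides as $f(X+Y)$ and $g(X+Y+Z)$ for Borel $f,g$, I obtain $f(X+Y) = g(X+Y+Z)$ a.s. Using the independence of $X+Y$ and $Z$, Fubini's theorem gives that for $\mu_{X+Y}$-almost every $s$, the identity $g(s+z) = f(s)$ holds for $\mu_Z$-almost every $z$. The non-degenerate Gaussian law $\mu_Z$ is equivalent to Lebesgue measure on $\mathbb{R}$, so for each such $s$, $g$ must coincide with the constant $f(s)$ Lebesgue-a.e.; comparing two such values $s_1, s_2$ forces $f(s_1)=f(s_2)$, so $f$ is $\mu_{X+Y}$-essentially a constant $c$. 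Taking expectations gives $c = E[X] = 0$, whence $E[X|X+Y] = 0$ a.s.

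It remains to rule out this last possibility. From $E[X|X+Y] = 0$ a.s., I get $E[X\,h(X+Y)] = 0$ for every bounded Borel $h$; taking $h(s) = e^{its}$ and factoring by independence yields
\begin{equation*}
-i\,\phi_X'(t)\,\phi_Y(t) = 0 \quad \text{for all } t \in \mathbb{R}.
\end{equation*}
Since $\phi_Y$ is continuous with $\phi_Y(0)=1$, it is non-vanishing on some interval $(-\delta,\delta)$, so $\phi_X'\equiv 0$ there, forcing $\phi_X\equiv 1$ on $(-\delta,\delta)$. Because $X$ is square-integrable, $\phi_X$ is $C^2$ at $0$ with $\phi_X''(0) = -E[X^2]$, and local constancy yields $E[X^2]=0$, contradicting the non-degeneracy of $X$. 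The hard step is the middle one: converting a.s. equality of the two conditional expectations into genuine rigidity on $f$ and $g$. The crucial tool is the full support and absolute continuity of the Gaussian $Z$, without which one could not conclude that $f$ and $g$ must be essentially constant.
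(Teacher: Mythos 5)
Your proposal is correct and follows essentially the same route as the paper: the equality clause of Proposition \ref{lm:inequalityforsecondmomentofconex} applied to $\sigma(X+Y+Z)\subset\sigma(X+Y,Z)$, then Fubini together with the full support of the Gaussian law to force the conditional expectation to be essentially constant, hence zero. The only divergence is the final contradiction: the paper simply computes $0=E\left[E[X|X+Y+Z]\,(X+Y+Z)\right]=E[X^2]$, whereas you take a characteristic-function detour ($\phi_X'\,\phi_Y\equiv 0$ near the origin); both are valid, but the direct orthogonality computation is shorter.
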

\begin{proof}
First, we note that the means of $E[X|X+Y]$ and $E[X|X+Y+Z]$ are both $E[X]=0$. It thus suffices to prove that
\begin{equation*}
E\left[ \left( E[X | X+Y+Z] \right)^2 \right] < E\left[ \left( E[X | X+Y] \right)^2 \right].
\end{equation*}
Applying Proposition \ref{lm:inequalityforsecondmomentofconex} gives
\begin{equation}
E\left[ \left( E[X | X+Y+Z] \right)^2 \right] \leq E\left[ \left( E[X | X+Y, Z] \right)^2 \right] = E\left[ \left( E[X | X+Y] \right)^2 \right], \label{eq:secondmomentsmall}
\end{equation}
where in the last equality we have invoked the fact that
\begin{equation*}
E[X| X+Y, Z]= E[X|X+Y], 
\end{equation*}
which holds because $Z$ is independent of $\sigma(X,Y)$. In what follows, we only need rule out the case where
\begin{equation}
E\left[ \left( E[X | X+Y+Z] \right)^2 \right] = E\left[ \left( E[X | X+Y] \right)^2 \right] \label{eq:twovarianceequal}.
\end{equation}

We proceed by contradiction. Assume, for the sake of contradiction, that equation \eqref{eq:twovarianceequal} holds. Combining \eqref{eq:secondmomentsmall} and Proposition \ref{lm:inequalityforsecondmomentofconex}, we have that
\begin{equation}
E[X | X+Y+Z] = E[X | X+Y, Z] =E[X | X+Y], \label{eq:xyz=xy}
\end{equation}
holds almost surely.\\
\indent Let $f_1$ and $f_2$ be two Borel functions on $\mathbb{R}$. Let $f_{1}(X+Y+Z)$ and $f_{2}(X+Y)$ be versions of $E[X|X+Y+Z]$ and $E[X|X+Y]$ respectively. Then the equality in \eqref{eq:xyz=xy} implies that
\begin{equation}
P\left( f_{1}(X+Y+Z) = f_{2}(X+Y) \right) =1.  \label{eq:fxyz=xy}
\end{equation}
In what follows, we use $P_{U}$ to denote the probability measure on $\mathbb{R}$ generated by the random variable $U$. Since $X,Y,Z$ are independent, \eqref{eq:fxyz=xy} is equivalent to
\begin{equation*}
\int_{\mathbb{R}} \int_{\mathbb{R}}  P(f_{1}(x+y+ Z) = f_{2}(x+y)) \,P_{X}(dx) \, P_{Y}(dy) = 1.
\end{equation*}
Since $P(f_{1}(x+y+ Z) = f_{2}(x+y)) \leq 1$, we have that
\begin{equation*}
P(f_{1}(x+y+ Z) = f_{2}(x+y)) =1 \quad \quad \text{$P_{X} \otimes P_{Y} $ - a.s..}
\end{equation*}
Then we can select $x_0$, $y_0$ such that
\begin{equation*}
P(f_{1}(x_0+y_0+ Z) = f_{2}(x_0+y_0)) =1,
\end{equation*}
which implies that $f_{1}(x_0 +y_0 +Z)$ is a constant almost surely. Note that since $Z$ is an absolutely continuous random variable whose density is positive everywhere, $f_1$ is a constant almost everywhere with respect to Lebesgue measure. By standard properties of convolution, $X+Y+Z$ is an absolutely continuous random variable. Thus, $f_{1}(X+Y+Z)$ is a constant almost surely, namely, $E[X|X+Y+Z]$ is a constant almost surely. Note that since the mean of $E[X|X+Y+Z]$ is $0$, $E[X|X+Y+Z]$ must be $0$ almost surely. Then
\begin{eqnarray*}
0 &=& E\left[ E[X|X+Y+Z] (X+Y+Z) \right]
= E\left[ X(X+Y+Z) \right]  \\
&=& E[X^2] + E[XY] +E[XZ] = E[X^2] + E[X]E[Y] + E[X]E[Z] = E[X^2],
\end{eqnarray*}
which implies that $X=0$ almost surely. Since $X$ is non-degenerate, a contradiction has been reached. This concludes the proof.
\end{proof}

With above lemmas in hand, we now turn to the variance of the least favorable noise.

\begin{theorem} \label{prop:variancemustequal}
Suppose $X$ is a non-degenerate random variable with zero mean and finite second moment. For any $Y \in \mathcal{V}_{\epsilon}(X)$ such that $\mathrm{var}\, E[X|X+Y]$ attains the minimum $L(X, \epsilon)$, we must have that
$E[Y^2] = \epsilon^2$.
\end{theorem}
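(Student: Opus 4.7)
The plan is to argue by contradiction: assume the minimizer $Y$ satisfies $E[Y^2] < \epsilon^2$ strictly, and then perturb $Y$ by adding a small independent Gaussian to produce an element of $\mathcal{V}_{\epsilon}(X)$ whose associated conditional-expectation variance is strictly smaller than $L(X,\epsilon)$, contradicting minimality.

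More concretely, suppose for contradiction that $E[Y^2] < \epsilon^2$. Choose $\delta > 0$ small enough that $E[Y^2] + \delta^2 \leq \epsilon^2$, and let $Z$ be a mean-zero non-degenerate Gaussian random variable with variance $\delta^2$, chosen independent of $\sigma(X,Y)$ (we may enlarge the probability space if necessary). Set $\tilde{Y} := Y + Z$. First I would check that $\tilde{Y} \in \mathcal{V}_{\epsilon}(X)$: independence of $\tilde Y$ from $X$ follows because $Y$ is independent of $X$ and $Z$ is independent of $\sigma(X,Y)$; the mean condition is $E[\tilde Y] = E[Y] + E[Z] = 0$; and the second-moment bound is $E[\tilde Y^2] = E[Y^2] + E[Z^2] \leq \epsilon^2$ since $Y$ and $Z$ are independent and both have mean zero.

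The main step is then a direct application of Lemma \ref{lm:strictlyless} with our $Y$ and $Z$: since $X$ is non-degenerate with zero mean and finite second moment, $Y$ is independent of $X$, and $Z$ is a non-degenerate Gaussian independent of $\sigma(X,Y)$, the lemma yields
\begin{equation*}
\mathrm{var}\, E[X | X + \tilde{Y}] = \mathrm{var}\, E[X | X + Y + Z] < \mathrm{var}\, E[X | X + Y] = L(X,\epsilon).
\end{equation*}
But $\tilde{Y} \in \mathcal{V}_{\epsilon}(X)$, so this contradicts the defining property of $L(X,\epsilon)$ as the infimum over $\mathcal{V}_{\epsilon}(X)$. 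Hence $E[Y^2] = \epsilon^2$.

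I do not anticipate a real obstacle here: Lemma \ref{lm:strictlyless} has already done the heavy lifting by ruling out the degenerate case where smoothing by a Gaussian leaves the conditional-expectation variance unchanged. The only thing to be careful about is the construction of $Z$ on a suitable probability space so that it is genuinely independent of $\sigma(X,Y)$ (rather than just independent of $X$), since Lemma \ref{lm:strictlyless} requires that stronger independence; this is standard and costs nothing, as $\mathrm{var}\, E[X \mid X+Y]$ depends only on the joint law of $(X,Y)$ and is invariant under such a product-space extension.
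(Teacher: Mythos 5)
Your proposal is correct and follows essentially the same route as the paper: assume $E[Y^2] < \epsilon^2$, add a small mean-zero Gaussian $Z$ independent of $\sigma(X,Y)$ so that $Y+Z \in \mathcal{V}_{\epsilon}(X)$, and invoke Lemma \ref{lm:strictlyless} to contradict minimality. Your additional remarks on verifying membership in $\mathcal{V}_{\epsilon}(X)$ and on constructing $Z$ via a product-space extension are correct and only make the argument more explicit than the paper's version.
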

\begin{proof}
We proceed by contradiction. For the sake of contradiction, let us assume $E[Y^2] < \epsilon^2$. We proceed to construct a Gaussian random variable $Z$ with zero mean and sufficiently small variance such that $Z$ is independent of $\sigma(X,Y)$ and $E[(Y+Z)^2] \leq \epsilon^2$. It is immediate to verify that $Y+Z \in \mathcal{V}_{\epsilon}(X)$. However, applying Lemma \ref{lm:strictlyless} gives
\begin{equation*}
\mathrm{var}\, E[X|X+Y+Z] < \mathrm{var}\, E[X|X+Y].
\end{equation*}
However, this directly contradicts the fact that $Y$ is a minimizer. Thus, by contradiction,  $E[Y^2] = \epsilon^2$. 
\end{proof}

\begin{remark}
Theorem \ref{prop:variancemustequal} tells us that in order to search for the least favorable noise, one may only need focus on the random variable $Y$ satisfying $E[Y^2] = \epsilon^2$. If one attempts to find the least favorable noise by the method of Lagrange multipliers, Theorem \ref{prop:variancemustequal} converts the inequality constraint to an equality constraint, which greatly simplifies the problem.
\end{remark}

\section{Properties of $L(X, \epsilon)$} \label{sec:LXepsilon}
The purpose of this section is to study some properties of the function $L(X, \epsilon)$. Proposition \ref{prop:propertyoneofL} is introduced in order to prove the key result of this section (Theorem \ref{prop:rightcontinuityforL}), which states that $L(X, \epsilon)$ is strictly decreasing and right continuous in $\epsilon$ on $[0, \infty)$.

\begin{proposition} \label{prop:propertyoneofL}
Let $\{ X_n \}_{n=1}^{\infty}$ be a sequence of random variables with zero mean which converges weakly to some random variable $X$ with zero mean. Let $\{\epsilon_n\}_{n=1}^{\infty}$ be a sequence of non-negative real number which converges to a non-negative real number $\epsilon$. If
\begin{equation*}
\sup_{1 \leq n < \infty} E[X_n^2] < \infty,
\end{equation*}
then
\begin{equation*}
L(X, \epsilon) \leq \liminf_{n \rightarrow \infty} L(X_n, \epsilon_n).
\end{equation*}
\end{proposition}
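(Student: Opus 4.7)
The plan is to mimic the existence proof (Theorem \ref{thm:existencetheorem}) at the level of the sequence $\{(X_n,\epsilon_n)\}$. I would begin by selecting a subsequence $\{n_k\}$ along which $L(X_{n_k},\epsilon_{n_k})$ converges to $\liminf_{n\to\infty} L(X_n,\epsilon_n)$. For each $k$, by the definition of the infimum, I choose $Y_{n_k}\in \mathcal{V}_{\epsilon_{n_k}}(X_{n_k})$ with $\mathrm{var}\, E[X_{n_k}|X_{n_k}+Y_{n_k}]\leq L(X_{n_k},\epsilon_{n_k})+1/k$. (Using an approximate minimizer, rather than invoking Theorem \ref{thm:existencetheorem}, sidesteps any question about whether each $X_n$ is non-degenerate.)

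Next I would establish tightness of $\{(X_{n_k},Y_{n_k})\}$. Since $\sup_n E[X_n^2]<\infty$ and $E[Y_{n_k}^2]\leq \epsilon_{n_k}^2$ is a bounded sequence, both marginal sequences are uniformly $L^2$-bounded and hence tight, yielding joint tightness. Passing to a further subsequence (still denoted $\{n_k\}$), I extract a weak limit $(\tilde X,\tilde Y)$. The hypothesis $X_n\Rightarrow X$ together with uniqueness of weak limits forces $\tilde X\overset{d}{=}X$. Because each $(X_n,Y_n)$ is a product measure and weak convergence of the two marginal sequences (to $X$ and $\tilde Y$ respectively) implies weak convergence of the product measures to $P_X\otimes P_{\tilde Y}$, the limit coordinates $\tilde X$ and $\tilde Y$ are independent. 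Applying Lemma \ref{lemma:usefulleemaformomentconvergence} to $\{Y_{n_k}\}$ gives $E[\tilde Y]=\lim_k E[Y_{n_k}]=0$ and $E[\tilde Y^2]\leq \liminf_k \epsilon_{n_k}^2=\epsilon^2$. Realizing an independent copy $Y$ of $\tilde Y$ on the probability space carrying $X$, so that $Y$ is independent of $X$ and $(X,Y)\overset{d}{=}(\tilde X,\tilde Y)$, then places $Y$ in $\mathcal{V}_\epsilon(X)$.

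Finally I would invoke Theorem \ref{prop:keylemmatoshowexistence} applied to $\{(X_{n_k},Y_{n_k})\}\Rightarrow(\tilde X,\tilde Y)$ to get $\mathrm{var}\, E[\tilde X|\tilde X+\tilde Y]\leq \liminf_{k}\mathrm{var}\, E[X_{n_k}|X_{n_k}+Y_{n_k}]\leq \liminf_{k}\bigl(L(X_{n_k},\epsilon_{n_k})+1/k\bigr)=\liminf_{n} L(X_n,\epsilon_n)$. Since $(X,Y)\overset{d}{=}(\tilde X,\tilde Y)$, the quantity $\mathrm{var}\, E[X|X+Y]$ equals $\mathrm{var}\, E[\tilde X|\tilde X+\tilde Y]$, and $L(X,\epsilon)\leq \mathrm{var}\, E[X|X+Y]$ by definition. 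Chaining these inequalities delivers the conclusion.

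The main obstacle will be identifying the weak subsequential limit $(\tilde X,\tilde Y)$ correctly: that its first coordinate is distributed as $X$ (not just as some tight limit), and that the two coordinates are independent. Both steps rest on standard but somewhat delicate facts about weak convergence---uniqueness of the weak limit for the first, and weak convergence of product measures for the second. Once these are in hand, everything else is bookkeeping that combines Lemma \ref{lemma:usefulleemaformomentconvergence} with the key Theorem \ref{prop:keylemmatoshowexistence}.
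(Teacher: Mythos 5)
Your proposal is correct and follows essentially the same route as the paper: choose a subsequence realizing the liminf, pick (near-)minimizers $Y_{n_k}$, use tightness to extract a weak limit $(\tilde X,\tilde Y)$, identify its marginals and independence, apply Theorem \ref{prop:keylemmatoshowexistence}, and transfer back to a copy $Y$ independent of $X$. The only difference is that you use approximate minimizers in place of the exact minimizers the paper obtains from Theorem \ref{thm:existencetheorem}; this is a harmless (arguably cleaner) variant, since it avoids worrying about whether each $X_n$ satisfies the non-degeneracy hypothesis of that theorem.
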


\begin{proof}
By the properties of limit inferior, there exists a subsequence $\{n(k)\}_{k=1}^{\infty}$ of $\{n\}_{n=1}^{\infty}$ such that
\begin{equation*}
\liminf_{n \rightarrow \infty} L(X_n, \epsilon_n) = \lim_{k \rightarrow \infty} L(X_{n(k)}, \epsilon_{n(k)}).
\end{equation*}
Thus, provided the limit of $ L(X_{n(k)}, \epsilon_{n(k)}) $ exists, we need only prove
\begin{equation*}
L(X, \epsilon) \leq \liminf_{k \rightarrow \infty} L(X_{n(k)}, \epsilon_{n(k)}).
\end{equation*}

\noindent For simplicity of notation, in what follows, we continue to write the subsequence $\{n(k)\}_{k=1}^{\infty}$ by $\{n\}_{n=1}^{\infty}$. We shall also assume the limit of $L(X_n, \epsilon_n)$ exists.

For every $n$, by Theorem \ref{thm:existencetheorem}, there exists a $Y_n \in \mathcal{V}_{\epsilon_n} (X_n)$ such that
\begin{equation*}
\mathrm{var} E[X_n| X_n+Y_n] = L(X_n, \epsilon_n).
\end{equation*}
Noting that $E[Y_{n}^{2}] \leq \epsilon_{n}^{2}$ and that $\epsilon_n$ converges to $\epsilon$, we have $\sup_{n} E[Y_{n}^{2}] < \infty$. Together with the fact that $\sup_{n} E[X_{n}^{2}] < \infty$, the family of random vectors $\{(X_n, Y_n)\}_{n=1}^{\infty}$ is tight. Then there exists a subsequence $\{(X_{n(k)}, Y_{n(k)})\}$ of $\{(X_n, Y_n)\}$ such that $(X_{n(k)}, Y_{n(k)})$ converges weakly to some random vector, say, $(\tilde{X}, \tilde{Y})$. By continuous mapping, we have that $X_{n(k)}$ converges weakly to $\tilde{X}$ and $Y_{n(k)}$ converges weakly to $\tilde{Y}$. Then, $X \overset{d}{=} \tilde{X}$, since $X_{n(k)}$ also converges weakly to $X$. By Lemma \ref{lemma:usefulleemaformomentconvergence}, we have
$E[\tilde{Y}] = \lim_{k} E[Y_{n(k)}] = 0$ and $E[\tilde{Y}^2] \leq \liminf_{k} E[Y_{n(k)}^2] \leq \epsilon^2$. Furthermore, since $X_{n(k)}$ is independent of $Y_{n(k)}$, we also have that $\tilde{X}$ is independent of $\tilde{Y}$. Invoking Theorem \ref{prop:keylemmatoshowexistence} here gives
\begin{eqnarray}
&&\mathrm{var} E[\tilde{X}| \tilde{X} + \tilde{Y}] \leq \liminf_{k \rightarrow \infty} \, \mathrm{var} E[X_{n(k)}| X_{n(k)}+Y_{n(k)}] \notag \\
&=& \liminf_{k \rightarrow \infty} \,  L(X_{n(k)}, \epsilon_{n(k)})
= \liminf_{n \rightarrow \infty}\, L(X_n, \epsilon_n), \label{eq:propertyoneofLlessthan}
\end{eqnarray}
where the last equality follows by our assumption that the limit of $L(X_n, \epsilon_n)$ exists.

There exists a random variable $Y$ such that $Y$ is independent of $X$ and $Y \overset{d}{=} \tilde{Y}$. Thus, $E[Y]= E[\tilde{Y}] =0$ and $E[Y^2] = E[\tilde{Y}^2] \leq \epsilon^2$, which implies $Y \in \mathcal{V}_{\epsilon}(X)$. By independence, we have $(X,Y) \overset{d}{=} (\tilde{X}, \tilde{Y})$, hence,
\begin{equation}
\mathrm{var} E[X|X+Y] = 
 \mathrm{var} E[\tilde{X}| \tilde{X} + \tilde{Y}], \label{eq:propertyoneofLequalvar}
\end{equation}
by a similar argument in the proof of Theorem \ref{prop:keylemmatoshowexistence}. Combining \eqref{eq:propertyoneofLlessthan} and \eqref{eq:propertyoneofLequalvar}, we have
\begin{equation*}
L(X, \epsilon) \leq \mathrm{var} E[X|X+Y] \leq \liminf_{n \rightarrow \infty}\, L(X_n, \epsilon_n).
\end{equation*} 
This completes the proof.
\end{proof}

\begin{theorem} \label{prop:rightcontinuityforL}
Let $X$ be a non-degenerate random variable with zero mean and finite second moment. Then, with fixed $X$, $L(X, \epsilon)$ is a strictly decreasing and right continuous function with respect to $\epsilon$ on $[0, \infty)$.
\end{theorem}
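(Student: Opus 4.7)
The plan is to split the theorem into its two assertions and handle them separately, using Theorem \ref{thm:existencetheorem} together with Lemma \ref{lm:strictlyless} for strict monotonicity, and Proposition \ref{prop:propertyoneofL} (applied to a constant sequence in the $X$-slot) for right continuity.

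For strict monotonicity, I will fix $0 \leq \epsilon_1 < \epsilon_2$. By Theorem \ref{thm:existencetheorem} there exists $Y_1 \in \mathcal{V}_{\epsilon_1}(X)$ attaining $\mathrm{var}\, E[X|X+Y_1] = L(X, \epsilon_1)$. Enlarging the underlying probability space if necessary, I will then introduce a Gaussian random variable $Z$ with mean $0$ and variance $\epsilon_2^2 - \epsilon_1^2 > 0$, independent of $\sigma(X, Y_1)$, and set $Y_2 := Y_1 + Z$. A direct check shows that $Y_2$ is independent of $X$, has mean $0$, and satisfies $E[Y_2^2] = E[Y_1^2] + (\epsilon_2^2 - \epsilon_1^2) \leq \epsilon_2^2$, so $Y_2 \in \mathcal{V}_{\epsilon_2}(X)$. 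Lemma \ref{lm:strictlyless}, applied with this $Y_1$ and $Z$, then gives
\begin{equation*}
L(X, \epsilon_2) \;\leq\; \mathrm{var}\, E[X|X+Y_1+Z] \;<\; \mathrm{var}\, E[X|X+Y_1] \;=\; L(X, \epsilon_1),
\end{equation*}
which is the desired strict decrease.

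For right continuity at a fixed $\epsilon_0 \geq 0$, I will take an arbitrary sequence $\epsilon_n \downarrow \epsilon_0$. Since $L(X, \cdot)$ is decreasing by the preceding step, $\lim_n L(X, \epsilon_n)$ exists and is at most $L(X, \epsilon_0)$. To get the reverse inequality I will apply Proposition \ref{prop:propertyoneofL} with the constant sequence $X_n \equiv X$, which trivially converges weakly to $X$ and satisfies $\sup_n E[X_n^2] = E[X^2] < \infty$, together with $\epsilon_n \to \epsilon_0$, obtaining
\begin{equation*}
L(X, \epsilon_0) \;\leq\; \liminf_{n \to \infty} L(X, \epsilon_n) \;=\; \lim_{n \to \infty} L(X, \epsilon_n).
\end{equation*}
The two bounds force $\lim_n L(X, \epsilon_n) = L(X, \epsilon_0)$, and since $\epsilon_n \downarrow \epsilon_0$ was arbitrary this yields right continuity.

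The main subtlety I anticipate lies in the strict monotonicity step: the naive choice $Y_2 := Y_1$, which already belongs to $\mathcal{V}_{\epsilon_2}(X)$, yields only the weak inequality $L(X, \epsilon_2) \leq L(X, \epsilon_1)$. To promote it to a strict inequality one must genuinely consume the excess budget $\epsilon_2^2 - \epsilon_1^2$ by adding an independent non-degenerate Gaussian, which is exactly the configuration Lemma \ref{lm:strictlyless} was set up to handle. Once strict monotonicity is secured, right continuity is essentially a formal consequence of monotonicity combined with the lower semicontinuity of $L(X, \cdot)$ furnished by Proposition \ref{prop:propertyoneofL}.
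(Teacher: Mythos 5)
Your proposal is correct and follows essentially the same route as the paper: strict monotonicity via Theorem \ref{thm:existencetheorem} plus an independent Gaussian perturbation handled by Lemma \ref{lm:strictlyless}, and right continuity by combining the monotonicity bound with Proposition \ref{prop:propertyoneofL} applied along $\epsilon_n \downarrow \epsilon_0$ with $X$ fixed. The sequential formulation of the right-continuity step is an immaterial rephrasing of the paper's $\limsup$/$\liminf$ argument.
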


\begin{proof}
We first shall prove that $L(X, \epsilon)$ is strictly decreasing with respect to $\epsilon$. Consider $0 \leq \epsilon_1 < \epsilon_2$. By Theorem \ref{thm:existencetheorem}, there exists $Y_1 \in \mathcal{V}_{\epsilon_1}(X)$ such that
\begin{equation*}
\mathrm{var} \, E[X|X+Y_1] = L(X, \epsilon_1).
\end{equation*}
We proceed to construct a Gaussian random variable with mean zero and variance $\epsilon_2^2 - \epsilon_1^2$ such that $Z$ is independent of $\sigma(X,Y_1)$. It is straightforward to check that $Y_1 +Z \in \mathcal{V}_{\epsilon_2}(X)$. Then by Lemma \ref{lm:strictlyless},
\begin{equation*}
L(X, \epsilon_2) \leq \mathrm{var} \, E[X|X+Y_1+Z] < \mathrm{var} \, E[X|X+Y_1] = L(X, \epsilon_1),
\end{equation*}
which means that $L(X, \epsilon)$ is strictly decreasing.

We now turn to the right continuity. Consider every $\epsilon_0 \in [0, \infty)$. For $\epsilon > \epsilon_{0}$, we have $L(X, \epsilon) < L(X, \epsilon_{0})$. Thus
\begin{equation}
\limsup_{\epsilon \downarrow \epsilon_{0}} L(X, \epsilon) \leq L(X, \epsilon_{0}). \label{eq:rightcontinuousforL1}
\end{equation}
Applying Proposition \ref{prop:propertyoneofL} yields
\begin{equation}
L(X, \epsilon_0) \leq \liminf_{\epsilon \downarrow \epsilon_{0}} L(X, \epsilon).  \label{eq:rightcontinuousforL2}
\end{equation}
Finally, combining \eqref{eq:rightcontinuousforL1} and \eqref{eq:rightcontinuousforL2} completes the proof.
\end{proof}

\begin{remark}
Theorem \ref{prop:rightcontinuityforL} shows that the maximum of the prediction error $E [X - E[X|X+Y]]^2$ is strictly increasing as the variance of the noise $Y$ increases. In other words, ``more noise makes prediction worse.'' This conclusively answers Question 4 posed on page 2 of \cite{10.1214/22-ECP467}.
\end{remark}
\bigskip

\noindent \textbf{Acknowledgements:} I thank my former Ph.D. advisor, Professor Philip A. Ernst, for many helpful and insightful discussions about this paper.

\bibliographystyle{plain}
\bibliography{reference}

\newpage

\section{Appendix}
This Appendix contains the proof of Lemma \ref{lm:connectionbetweensequenceandlimit}.
\begin{proof}[Proof of Lemma \ref{lm:connectionbetweensequenceandlimit}]
We first claim that the class $\mathcal{H}$ of bounded real-valued Borel function $h$ on $\mathbb{R}$ such that \eqref{eq:limexpectationequal} holds must satisfy properties (A.1)-(A.3) below:
\begin{enumerate}[label={(A.\arabic*)}]
  \item $\mathcal{H}$ is a vector space which contains constant functions;
  \item $\mathcal{H}$ is closed under uniform convergence;
  \item For a uniformly bounded sequence $\{h_k\}$ of non-negative functions in $\mathcal{H}$ where $\forall k$, $h_{k} \leq h_{k+1}$, and  $\forall s$, $h_{k}(s)  \rightarrow h(s)$, we have that $h \in \mathcal{H}$.
\end{enumerate}
It is immediate that $\mathcal{H}$ is a vector space. Furthermore, for every constant function $c(x) \equiv c $, by \eqref{eq:limexn=ex}, we have
\begin{equation*}
\lim_{n \rightarrow \infty} E[c(U+W) \,T_n] = \lim_{n \rightarrow \infty} c \, E[T_n] = \lim_{n \rightarrow \infty} c \, E[X_n] =
 c\, E[X] = c\, E[T] = E[c(U+W) \,T].
\end{equation*}
Thus, property (A.1) holds.

To check property (A.2), suppose $h_{k} \in \mathcal{H}$ converges uniformly to $h$. Then
\begin{eqnarray*}
&&|E[h(U+W)\, T] - E[h(U+W)\,  T_n]|  \\
&\leq&  \left|E\left[\left(h(U+W) - h_{k}(U+W)\right) \, T \right]\right| + \left|E\left[\left(h(U+W) - h_{k}(U+W)\right) \, T_n \right]\right| \\
&& + |E[h_{k}(U+W) \, T] - E[h_{k}(U+W) \, T_n]| \\
&\leq& \left\{ E\left[ \left(h(U+W) - h_{k}(U+W)\right)^2 \right] \right\}^{1/2} \cdot \left\{E[T^2]\right\}^{1/2}  \\
&& + \left\{ E\left[ \left(h(U+W) - h_{k}(U+W)\right)^2 \right] \right\}^{1/2} \cdot \left\{E[T_n^2]\right\}^{1/2}  \\
&& + |E[h_{k}(U+W) \, T] - E[h_{k}(U+W) \, T_n]| \\
&\leq& 2M^{1/2} \, \left\{ E\left[ \left(h(U+W) - h_{k}(U+W)\right)^2 \right] \right\}^{1/2}  \\
&& + |E[h_{k}(U+W) \, T] - E[h_{k}(U+W) \, T_n]|,
\end{eqnarray*}
where the second inequality follows by H\"{o}lder's inequality and in the last inequality we have applied the fact that $\sup_{n} E[T_n^2] \leq M$ and $E[T^2] \leq M$.
Letting $n$ tend to $\infty$ and noting that \eqref{eq:limexpectationequal} holds for $h_k$, we have
\begin{equation}
\limsup_{n\rightarrow \infty} |E[h(U+W)T] - E[h(U+W) T_n]|
\leq 2M^{1/2} \, \left\{ E\left[ \left(h(U+W) - h_{k}(U+W)\right)^2 \right] \right\}^{1/2}. \label{eq:holderbound}
\end{equation}
Letting $k$ tend to $\infty$, and recalling the fact that $h_k$ converges uniformly to $h$, we conclude that $h \in \mathcal{H}$.

We now turn to (A.3). Suppose uniformly bounded non-negative functions $h_k\uparrow h$. It is immediate that $h_{k}(U+W) - h(U+W)$ converges to $0$ pointwise and that it is uniformly bounded. By dominated convergence, we have
\begin{equation}
\lim_{n \rightarrow \infty} E\left[ \left(h(U+W) - h_{k}(U+W)\right)^2 \right] = 0 \label{eq:senondmomentto0forh}
\end{equation}
Using a similar argument in the proof of (A.2) yields \eqref{eq:holderbound} again. Together with \eqref{eq:senondmomentto0forh} we conclude that $h \in \mathcal{H}$.

With above preparation in hand, we prove the equality in \eqref{eq:limexpectationequal} for every bounded Borel function $h$. By the monotone class theorem (cf. \cite[p.91]{rogers2000diffusions}), it suffices to prove  \eqref{eq:limexpectationequal} for every bounded continuous function $h$ on $\mathbb{R}$. Recall that by construction, $(U_n, W_n)$ converges to $(U,W)$ almost surely. Since $h$ is continuous, we have $h(U_n+W_n) \rightarrow h(U+W)$ almost surely. Since $h$ is bounded, by dominated convergence, we have
\begin{equation}
\lim_{n \rightarrow \infty}E\left[ \left( h(U_n + W_n) - h(U+W)\right)^{2} \right] = 0. \label{eq:hofUnclosetohofU}
\end{equation}
Note that
\begin{equation*}
\sup_{1\leq n < \infty} E\left[ \left( U_{n} h(U_n + W_n) \right)^{2}\right] \leq \|h\|^2 \cdot \sup_{1 \leq n < \infty} E[U_{n}^{2}] = M \|h\|^2,
\end{equation*}
where $\|h\| := \sup_{x} |h(x)|$. Then the family of random variables $\{U_{n} h(U_n + W_n)\}_{n=1}^{\infty}$ is uniformly integrable. Further, $U_{n} h(U_n + W_n)$ converges to $U h(U+W)$ almost surely, and so
\begin{equation}
\lim_{n \rightarrow \infty} E[U_{n} h(U_n + W_n)] = E[U h(U + W)]. \label{eq:uhuwntouhuw}
\end{equation}
Recalling the definitions of $T_n$ and $T$ given in \eqref{eq:definitionofTandTn}, and invoking the standard properties of conditional expectation, we have
\begin{equation*}
E[h(U_n + W_n) \, T_n] = E\left[h(U_n + W_n) E[U_{n} | U_n + W_n ]\right] = E[U_{n} h(U_n + W_n)],
\end{equation*}
and
\begin{equation*}
E[h(U + W) \, T] = E\left[h(U + W) E[U | U + W ]\right] = E[U h(U + W)].
\end{equation*}
Plugging the above two displays into \eqref{eq:uhuwntouhuw} yields
\begin{equation}
\lim_{n \rightarrow \infty} E[h(U_n + W_n) \, T_n] = E[h(U + W) \, T]. \label{eq:onelink}
\end{equation}
By H\"{o}lder's inequality, we have
\begin{eqnarray*}
&&\left| E[h(U_n + W_n) \, T_n] - E[h(U + W) \, T_n]\right| \\
 &=& \left| E[(h(U_n + W_n)- h(U + W)) \, T_n]  \right| \\
 &\leq& \left\{ E\left[ \left(h(U_n+W_n) - h(U+W)\right)^2 \right] \right\}^{1/2} \cdot \left\{ E[T_{n}^{2}] \right\}^{1/2} \\
 &\leq& M^{1/2} \, \left\{ E\left[ \left(h(U_n+W_n) - h(U+W)\right)^2 \right] \right\}^{1/2}.
\end{eqnarray*}
Together with \eqref{eq:hofUnclosetohofU}, we obtain
\begin{equation}
\lim_{n\rightarrow \infty} \left| E[h(U_n + W_n) \, T_n] - E[h(U + W) \, T_n]\right| = 0 \label{eq:twolink}
\end{equation}
Combining \eqref{eq:onelink} and \eqref{eq:twolink}, the equality in \eqref{eq:limexpectationequal} follows.
\end{proof}

\newpage

\end{document}